\definecolor{darkblue}{rgb}{0,0,0.4} 
\DeclarePairedDelimiter\ceil{\lceil}{\rceil}
\newtheorem{theorem}{Theorem}[section]
\newtheorem{lemma}[theorem]{Lemma}
\newtheorem{corollary}[theorem]{Corollary}
\theoremstyle{remark}
\newtheorem{remark}[theorem]{Remark}
\theoremstyle{definition}
\newtheorem{definition}[theorem]{Definition}
\numberwithin{equation}{section}
\renewcommand{\theta}{\vartheta}
\newcommand{\set}[1]{\left\{#1\right\}}
\newcommand{\ball}{\mathds{B}}
\newcommand{\sphere}{\mathds{S}}
\newcommand{\CP}[1]{\mathds{CP}^{#1}}
\newcommand{\bCP}[1]{\overline{\mathds{CP}^{#1}}}
\newcommand{\del}{\partial}
\newcommand{\nbd}{\mathcal{N}}
\newcommand{\m}{m} % mirror image of a link
\newcommand{\bicol}{\varphi}
\newcommand{\LBT}{(T,\left\{\Pi_v\right\})}
\newcommand{\DP}{\mathcal{D}}
\newcommand{\T}{\mathds{T}}
\DeclareMathOperator{\Int}{Int}
\newcommand{\Slice}{\mathcal{S}}
\begin{document}

\title{Unknotting number 21 knots are slice in $K3$}

\author{Marco Marengon}%
\address{Alfr\'ed R\'enyi Institute of Mathematics, Budapest, Hungary}%
\email{\href{mailto:marengon@renyi.hu}{marengon@renyi.hu}}%

\author{Stefan Mihajlovi\'c}
\address{Central European University, Budapest, Hungary}%
\email{\href{mailto:mihajlovic.stefan@renyi.hu}{mihajlovic.stefan@renyi.hu}}

\begin{abstract}
    We prove that all knots with unknotting number at most 21 are smoothly slice in the $K3$ surface. We also prove a more general statement for 4-manifolds that contain a plumbing tree of spheres.
    Our strategy is based on a flexible method to remove double points of immersed surfaces in 4-manifolds by tubing over neighbourhoods of embedded trees.
    As a byproduct, we recover a classical result of Norman and Suzuki that every knot is smoothly slice in $\sphere^2 \times \sphere^2$ and in $\CP2 \# \bCP2$.
\end{abstract}

\maketitle

\section{Introduction}

Given a smooth 4-manifold $X$, a knot $K \subset \sphere^3$ is called smoothly \emph{slice in $X$} if it bounds a properly embedded smooth disc $D \subset X^\circ := X-\Int \ball^4$. An analogous definition can be given in the topological category, but unless otherwise stated, we assume that a 4-manifold $X$ is smoooth and that \emph{slice} (without further modifiers) means \emph{smoothly slice}.
One focus of this paper is on a smooth closed 4-manifold known as the $K3$ surface, for which we prove the following.

%\SM{In the abstract I added 'tree' in 'plumbing tree of spheres' but there is also 'neighbourhoods of embedded trees'}
%\SM{I changed this first paragraph again, you can change it again if you like and we'll converge}

\begin{theorem}
\label{thm:K3-intro}
    Every knot $K$ in $\sphere^3$ with unknotting number $u(K) \leq 21$ is smoothly slice in $K3$.
\end{theorem}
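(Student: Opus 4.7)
My plan is to follow the road map suggested by the abstract: first represent $K$ as the boundary of an immersed disc in $K3^\circ$ with few double points, and then remove every double point by tubing along trees in a plumbing of spheres embedded in $K3$.

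First, from $u(K) \leq 21$ the knot $K$ becomes unknotted after at most $21$ crossing changes. The standard move of pushing each crossing change into the fourth dimension converts an embedded disc bounding the unknot in $\ball^4$ into an immersed disc $D \subset \ball^4$ bounded by $K$ with at most $21$ transverse double points. Fixing a smooth embedding $\ball^4 \hookrightarrow K3^\circ$ places $D$ in $K3^\circ$, disjoint from any prescribed sphere configuration in the complement.

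Second, inside $K3$ I would exhibit a plumbing $P$ of at least $21$ smoothly embedded spheres realising (part of) the intersection form $2(-E_8)\oplus 3H$ of $K3$. A convenient source is the Kummer construction: the resolution of $T^4/\Z_2$ supplies $16$ disjoint $(-2)$-spheres, which can be supplemented by spheres descending from the torus factors, yielding comfortably more than $21$ spheres in a tree-like configuration. For each double point $p_i$ of $D$, I would then choose a sphere $S_i \subset P$ together with an embedded tree $T_i \subset P$ connecting $S_i$ to a small arc near $p_i$, and tube $D$ along a regular neighbourhood of $T_i$. The self-intersections encountered along $T_i$ are arranged, via orientation and framing choices, to cancel the local contribution of $p_i$; in the special case of a single $0$-framed sphere this reduces to the classical Norman--Suzuki move underlying sliceness in $S^2\times S^2$.

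I expect the main obstacle to be the simultaneous realisation of all (up to) $21$ tubings. The trees $T_i$ must be pairwise disjoint inside $P$, the framings encountered along each $T_i$ have to match the local framing of $D$ near $p_i$, and no new double points may appear during the modification. The ``tubing over neighbourhoods of embedded trees'' technique is precisely designed to supply this flexibility: the combinatorics of the plumbing and the freedom in choosing both $S_i$ and $T_i$ should provide enough room to realise all cancellations disjointly, with the specific number $21$ emerging as the maximal count of independent tubings that the plumbing in $K3$ can support.
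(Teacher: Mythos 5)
Your overall plan --- convert $u(K)\leq 21$ into an immersed disc with $\leq 21$ double points, embed a plumbing of spheres in $K3$, and remove the double points by tubing over neighbourhoods of embedded trees --- is the right road map, and the first step is correct. However, there are two genuine gaps in the middle, and they are exactly where the paper's new idea lives.

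First, the sphere configuration. The $16$ exceptional $(-2)$-spheres from the Kummer resolution are pairwise \emph{disjoint}, so they form no plumbing tree at all, and it is not clear how ``spheres descending from the torus factors'' would complete them to a tree of $\geq 22$ spheres: the images of the sub-tori are typically higher-genus or singular, not embedded spheres. The paper instead invokes a specific plumbing tree of $22$ smoothly embedded spheres in $K3$ due to Finashin--Mikhalkin (three $\widetilde E_6$ fibres and a section of the elliptic fibration $E(2)$), and the bound $21 = 22-1$ arises because a tree of $22$ spheres has exactly $21$ plumbing double points --- not from a count of ``independent tubings'' supported by framings.

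Second, and more fundamentally, your plan removes the $21$ double points by $21$ separate tubings, one sphere $S_i$ per double point $p_i$, and you explicitly flag that the framing of $S_i$ must be arranged to cancel the local contribution of $p_i$. This is where the argument breaks down: a single $(-2)$-sphere cannot cancel a double point via the Norman move (that requires a $0$-framed sphere with a dual), and you offer no mechanism to fix the framing defect. The paper avoids this entirely by a different construction: it finds \emph{one} locally bipartitioned tree $T$ with $21$ vertices, suitably embedded both in the plumbing $S$ (with vertices at the $21$ plumbing intersections, and meeting each sphere in a contractible set) and in the immersed disc $D$ (with vertices at the $21$ self-intersections). Regular neighbourhoods of these two embeddings are $4$-balls intersecting the surfaces in the \emph{same} link $L(T,\{\Pi_v\})$, which is amphichiral, so a single multi-tube operation removes all the double points at once. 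Because the spheres are ``used up'' --- each becomes a disc after deleting its contractible intersection with $T$ --- their self-intersection numbers never enter the argument and no genus is added. This simultaneous, framing-independent cancellation is the missing key idea: without it, the objection you raise at the end of your proposal (pairwise disjointness, framing matching, no new double points) is not resolved by ``flexibility of the technique'' but is a real obstruction.
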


Let $\Slice(X)$ denote the set of knots that are (smoothly) slice in $X$.
Depending on the 4-manifold, $\Slice(X)$ can coincide with the set of classically slice knots $\Slice(\sphere^4)$ (e.g.\ for $X=\sphere^4$, $\sphere^1 \times \sphere^3$, or $\T^4$), with the set of all knots (e.g.\ for $\sphere^2 \times \sphere^2$ or $\CP2 \# \bCP2$, cf.\ \cite{N:slice, S:slice}), or can differ from both of them.

So far $\CP2$ (together with $\bCP2$) is the only known example of a simply connected closed 4-manifold with $\Slice(\sphere^4) \subsetneq \Slice(X) \subsetneq \set{\mathrm{knots}}$, by a result of Yasuhara \cite{Y:CP2}.
A key property of $\CP2$ used by Yasuhara is that only finitely many homology classes in $H_2(\CP2)$ are primitive (not multiples of other classes); this is no longer true for e.g.\ $\CP2\#\CP2$, or more generally $n\CP2$, and this is the main reason why we have no other examples of 4-manifolds with $\Slice(\sphere^4) \subsetneq \Slice(X) \subsetneq \set{\mathrm{knots}}$.
In particular, the case of the simplest simply connected closed 4-manifold which is not homeomorphic to a connected sum of $\CP2$, $\bCP2$, and $\sphere^2 \times \sphere^2$ - namely the $K3$ surface, is also wide open.

The question whether $\Slice(K3) \neq \set{\mathrm{knots}}$ was raised in \cite[Question 6.1]{MMP:slice}. Prior to our work, it was shown in \cite[Corollary 2.8]{MMP:slice} that every knot with unknotting number $\leq 2$ is slice in $K3$. This result was later strengthened in unpublished works of Mukherjee and Stipsicz-Szab\'o to unknotting number $\leq 4$ and $\leq 9$, respectively.
Our new bound in Theorem \ref{thm:K3-intro} follows from the following more general theorem and from the existence of a plumbing tree of 22 spheres smoothly embedded in $K3$.

\begin{theorem}
\label{thm:plumbing}
        If there is a plumbing tree of $n$ smooth (resp.\ locally flat) spheres $S = S_1 \cup \ldots \cup S_n$ embedded into a smooth (resp.\ topological) 4-manifold $X^4$, then any knot $K  \subset \sphere^3$ with 4-dimensional clasp number $c_4(K) \leq n-1$ (resp.\ $c_4^{\mathrm{top}}(K) \leq n-1$) is smoothly (resp.\ topologically) slice in $X^4$.
\end{theorem}

% note to self: check this condition for top locally flat doesn't imply more - as in many  cases all knots will be topologically slice, is it superfluous to state it even? -think a few min about it

The smooth (resp.\ topological) 4-dimensional clasp number $c_4(K)$ (resp.\ $c_4^{\mathrm{top}}(K)$) appearing in the statement above is the minimum number of self-intersections of a smooth (resp.\ locally flat) normally immersed disc in $\ball^4$ with boundary $K$. The inequalities $c_4^{\mathrm{top}}(K) \leq c_4(K) \leq u(K)$ hold for every knot $K$. For more details see Section \ref{sec:normal}.

The main technique of our paper is a more general \textit{tubing} procedure for immersed surfaces in 4-manifolds.
The usual tubing procedure, appearing e.g.\ in \cite{N:slice}, consists of removing pieces of our immersed surfaces contained in two balls centred at two double points,
%thus producing two Hopf links in the boundaries of such balls,
and connecting the resulting Hopf links (obtained by intersecting the surfaces with the boundaries of said balls) with two tubes.
In comparison to the standard case, we use more general links that are mirrors of each other, which are obtained by intersecting the surfaces with the boundary of a neighborhood of an embedded 1-complex. See Figure \ref{fig:sketchIntro}.

%\SM{Here, I'll add the consequence coming from the 11/8 conjecture}
The $K3$ surface is a very natural example to consider, and a historically important one. On one hand it has a simple Morse theoretical description, with no 1- and 3-handles. On the other hand it has a rich geometric structure (it is symplectic, and in fact a Kähler surface) and already displays all the exotic complications of dimension 4 having infinitely many exotic copies. Because of this, understanding $\Slice(K3)$ could hint at the more general behaviour of 4-manifolds with non-trivial Seiberg-Witten invariants. Finally, the importance of $K3$ can be seen noting that if the 11/8-conjecture were true \cite{11/8}, then any simply connected smooth 4-manifold would be homeomorphic to a connected sum of some number of $\CP2$, $\bCP2$, $\sphere^2 \times \sphere^2$ and $K3$.

Furthermore, understanding $\Slice(K3)$ can shed light on whether sliceness detects exotic pairs, i.e.\ if there exists homeomorphic 4-manifolds whose smooth types are distinguished by their sets of slice knots. This question is \cite[Question 6.2]{MMP:slice}, and is motivated by the hope that sliceness could be used to disprove the 4-dimensional Poincar\'e conjecture \cite{FGMW:mnm}.

While the question is still open, it was recently shown that \emph{H-sliceness} (another generalisation of the notion of classical sliceness to all 4-manifolds) does indeed detect the exotic pair given by $K3 \# \bCP2$ and $3\CP2 \# 20 \bCP2$ \cite[Corollary 1.5]{MMP:slice}. Note that every knot is slice in $3\CP2 \# 20 \bCP2$. Then, an example of a knot that is not slice in $K3$ would be a great step towards showing that the above exotic pair is detected by sliceness too. On the other hand, if every knot is slice in $K3$, then there is no hope to distinguish this exotic pair by sliceness.

We mentioned earlier that our proof is based on the existence of a plumbing tree of 22 spheres smoothly embedded in the $K3$ surface \cite[Proposition 1]{FM:K3}.
Such a plumbing tree was used by Finashin-Mikhalkin to build a $(-86)$-framed sphere in $K3$ \cite[Theorem 1]{FM:K3}.
Their result was recently expanded by Stipsicz-Szab\'o \cite[Theorem 1.1]{SSz:spheres}, who exhibited plumbing trees of spheres in all elliptic surfaces $E(n)$, and used them to produce very negative spheres in $E(n)$. $E(2)=K3$ recovers the result of Finashin-Mikhalkin, and using the plumbing trees from \cite{SSz:spheres} we can in fact prove the following result for $E(n)$:

\begin{corollary}
    \label{cor:En-intro}
    For $n\geq 2$, every knot $K$ with 4-dimensional clasp number
    \[
    c_4(K) \leq 11 \cdot n - \ceil*{\frac n5}
    \]
    is smoothly slice in $E(n)$.
\end{corollary}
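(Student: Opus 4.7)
The plan is to apply Theorem~\ref{thm:plumbing} directly, taking $X = E(n)$ and feeding in the plumbing tree of smooth spheres supplied by the Stipsicz--Szab\'o construction of~\cite{SSz:spheres}. In view of Theorem~\ref{thm:plumbing}, the corollary reduces to the purely topological claim that, for every $n \geq 2$, the elliptic surface $E(n)$ contains an embedded plumbing tree of
\[
m(n) \;=\; 11n - \ceil*{\tfrac n5} + 1
\]
smooth spheres. Indeed, once such a tree is produced, Theorem~\ref{thm:plumbing} immediately gives smooth sliceness of every $K$ with $c_4(K) \leq m(n) - 1 = 11n - \ceil*{\frac n5}$.

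My first step would be to locate in~\cite[Theorem 1.1]{SSz:spheres} the explicit plumbing graphs whose existence is asserted in $E(n)$, and to read off the number of vertices as a function of $n$. The sanity check at $n = 2$ is encouraging: the Finashin--Mikhalkin plumbing in $K3 = E(2)$ has $22$ spheres~\cite{FM:K3}, and $m(2) = 11 \cdot 2 - 1 + 1 = 22$, so the bound indeed recovers Theorem~\ref{thm:K3-intro} exactly.

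Second, I would verify that the Stipsicz--Szab\'o tree in $E(n)$ has vertex count precisely $m(n)$. I expect the construction to be modular along the fiber-sum decomposition $E(n) = E(1) \#_F \cdots \#_F E(1)$, contributing roughly $11$ spheres per summand, with the correction $-\ceil*{\frac n5}$ reflecting that every five consecutive $E(1)$-summands allow one sphere to be shared, amalgamated, or otherwise saved in the plumbing. This is the step where one must be careful that the spheres glue into a \emph{tree} (no cycles, no extra intersections beyond the plumbing ones), since Theorem~\ref{thm:plumbing} requires a genuine plumbing graph.

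The main obstacle is exactly this bookkeeping: extracting the combinatorial data of the spheres in~\cite{SSz:spheres}, checking that their intersection pattern is a tree, and confirming the vertex count $m(n)$ as a closed formula. No new 4-manifold topology is needed beyond Theorem~\ref{thm:plumbing} and the existence statement already proved by Stipsicz--Szab\'o; the corollary is then immediate.
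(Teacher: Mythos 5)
Your proposal is correct and matches the paper's proof exactly: cite \cite[Proof of Theorem 1.1]{SSz:spheres} for the existence of a plumbing tree of $11n - \ceil*{\frac n5} + 1$ smooth spheres in $E(n)$, then apply Theorem \ref{thm:plumbing}. The speculative remarks about where the $-\ceil*{\frac n5}$ correction comes from are not needed for the argument, but the structure and the sanity check at $n=2$ are right.
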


%    {
%    \color{red}
%    Marco believes that from \cite{SSz:spheres} it follows that there is a plumbing tree of spheres in $E(n)$ with exactly
%    \[
%    11 \cdot n - \ceil*{\frac n5} + 1
%    \]
%    spheres. Stefan, please check the numerology.
%    }

For $n=1$, $E(1) = \CP2 \#9\bCP2$, so every knot is slice in $E(1)$. For $n=2$ we recover Theorem \ref{thm:K3-intro} (with $u(K) \leq 21$ replaced by the more general $c_4(K) \leq 21$).

%In addition to the connection with the classical problem of constructing negative framed spheres discussed above, the question of sliceness in $K3$ is also related to the story of detecting exotic pairs. In \cite{FGMW:mnm} it was speculated that sliceness could be used to disprove the 4-dimensional Poincar\'e conjecture. More generally, one can ask whether sliceness detects exotic pairs, i.e., if there exists homeomorphic 4-manifolds whose smooth types are distinguished by their sets of slice knots.
%While the question is still open, it was recently shown that \emph{H-sliceness} (another generalisation of the notion of classical sliceness to all 4-manifolds) does indeed detect the exotic pair given by $K3 \# \bCP2$ and $3\CP2 \# 20 \bCP2$ \cite[Corollary 1.5]{MMP:slice}. An example of a knot that is not slice in $K3$ would be a great step towards showing that the above exotic pair is detected by sliceness too. On the other hand, if every knot is slice in $K3$, then there is no hope to distinguish this exotic pair by sliceness.

Our technique to prove Theorem \ref{thm:plumbing} (which we outline in Section \ref{sec:outline} below) is quite flexible and amenable to different applications. Recall that, given a knot $K \subset \sphere^3$ and a smooth 4-manifold $X$, the \emph{slice genus in $X$} of $K$ is defined as
\[
g_X(K) = \min\set{g(\Sigma) \,|\, \Sigma \xhookrightarrow{\mathrm{sm}} X^\circ, \Sigma \textrm{ compact}, \del \Sigma = K }.
\]
%\MM{Revised here.}

A classical argument of Norman (outlined in \cite[Lemma 1]{N:slice}) implies that for a smooth, closed 4-manifold $X$ with a 0-framed sphere the function $g_X$ is bounded: just take an immersed surface $\Sigma$ with $\partial \Sigma = K$ sitting in a homology class dual to the 0-framed sphere $S_0$, and use parallel copies of $S_0$ to resolve all double points by tubing.

We re-interpret this result using our technique.
\begin{theorem}[\cite{N:slice}]
\label{thm:0-sphere-intro}   
    If a smooth 4-manifold $X^4$ contains a smoothly embedded 0-framed sphere $S$ with a geometrically dual smooth closed surface $S^*$, then for every knot $K$
    \[g_X(K)\leq g(S^*).\]
\end{theorem}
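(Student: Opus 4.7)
The plan is to build $\Sigma$ by combining a genus $g(S^*)$ piece carved out of $S^*$ with an embedded annulus connecting $K$ to a meridian of $S$; the annulus will first be produced as an immersed cylinder via any free homotopy, and then promoted to an embedding by tubing through parallel copies of $S$. Choose $\ball^4 \subset X$ disjoint from $S \cup S^*$, let $q$ be the single intersection point of $S$ and $S^*$, and pick a small disc $d_q \subset S^*$ containing $q$, so that $\mu := \del d_q$ is a meridian of $S$ lying on $S^*$. Then $\Sigma_0 := S^* \setminus \Int d_q$ is a surface in $X^\circ$ of genus $g(S^*)$ with $\del\Sigma_0 = \mu$, and it will suffice to construct an embedded annulus $A \subset X^\circ$ with $\del A = K \sqcup \mu$, as $\Sigma := \Sigma_0 \cup_\mu A$ will then be the desired embedded genus-$g(S^*)$ surface bounding $K$.

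Both $K$ and $\mu$ are nullhomotopic in $X^\circ$: $K$ because $\pi_1(\sphere^3) \to \pi_1(X^\circ)$ is trivial, and $\mu$ because it bounds the meridional disc of $S$. They are therefore freely homotopic, and any free homotopy, smoothed via transversality, gives a generic immersed cylinder $A_0 \subset X^\circ$ with $\del A_0 = K \sqcup \mu$ and finitely many transverse double points $p_1, \ldots, p_c$. Using the fact that $S$ is 0-framed, I would take $c$ pairwise disjoint parallel copies $S_1, \ldots, S_c$ of $S$ inside a small tubular neighborhood $\nbd(S)$, shrunk enough to avoid $A_0$. For each $p_i$, tube one of the two sheets of $A_0$ at $p_i$ through $S_i$: choose a generic embedded arc $\alpha_i \subset X^\circ$ from a point on that sheet near $p_i$ to a point on $S_i$, remove a small disc from the sheet around $p_i$, glue in the annular neighborhood of $\alpha_i$ as a tube, and cap off with $S_i$ minus a small disc. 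Because $S_i$ is a sphere, this is a connected sum with a trivial $\sphere^2$, which preserves the topological type of $A_0$ while eliminating the double point $p_i$. After performing this at all $c$ double points, $A_0$ becomes an embedded annulus $A \subset X^\circ$ with $\del A = K \sqcup \mu$.

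The main obstacle is arranging the arcs $\alpha_i$, their associated tubes, and the parallel spheres $S_i$ to be pairwise disjoint and disjoint from $A_0$ and $\Sigma_0$. The 0-framing hypothesis directly delivers pairwise disjoint parallel copies $S_1, \ldots, S_c$. The remainder reduces to a genericity argument: the arcs $\alpha_i$ are 1-dimensional submanifolds of the 4-manifold $X^\circ$, so by standard transversality they can be chosen pairwise disjoint and disjoint from any fixed 2-dimensional subset, in particular from $A_0$, from $\Sigma_0$, and from the other $S_j$'s. Sufficiently thin annular neighborhoods of these arcs inherit the disjointness, so all the surgeries can be carried out without creating new intersections, producing the embedded annulus $A$ and hence the embedded surface $\Sigma = \Sigma_0 \cup_\mu A$ of genus $g(S^*)$ bounding $K$.
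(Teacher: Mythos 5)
Your proof takes a genuinely different route from the paper's. The paper produces a normally immersed disc $D$ with $n$ self-intersections, finds a suitably embedded locally bipartitioned tree in $D$ and another in the plumbing $S_1 \cup \cdots \cup S_n \cup S^*$ (all the $n$ double points handled simultaneously), and applies Lemma~\ref{lem:tubing} to a single pair of 4-balls. You instead split off $\Sigma_0 := S^*\setminus d_q$ immediately, reduce to building an embedded annulus $A$ from $K$ to the meridian $\mu = \del d_q$, obtain it from an immersed annulus $A_0$, and then remove the double points one at a time by a Norman-type tube to a single parallel copy $S_i$ per double point.

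There is, however, a gap at the central step: the claim that removing a disc $\delta_i \ni p_i$ from one sheet of $A_0$ and gluing in the tube plus $S_i\setminus d_i'$ \emph{eliminates the double point while staying embedded}. Once $\delta_i$ is removed, the circle $\del\delta_i$ is a meridian of the other local sheet of $A_0$ at $p_i$. The tube plus cap is a disc bounded by $\del\delta_i$; for it to be disjoint from $A_0$, the algebraic intersection number of the small sphere $\Sigma_{p_i} := \delta_i \cup \text{tube} \cup (S_i\setminus d_i')$ with $A_0$ must equal $1$ (just the transverse point $p_i$), and since $\Sigma_{p_i}$ is homologous to $S_i$ via the 3-ball $B_i$ bounded by $\delta_i \cup \text{tube} \cup (-d_i')$, one has $\Sigma_{p_i}\cdot A_0 - S_i\cdot A_0 = B_i\cdot \del A_0 = B_i\cdot\mu$. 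So the tube can avoid $A_0$ only when the small sphere links $\mu$ once, which in turn requires that $q_i := S_i\cap S^*$ lie \emph{inside} the removed disc $d_q$, with the orientation of $S_i$ chosen compatibly. Your genericity argument disposes only of the 1-dimensional arcs $\alpha_i$; it does not control the 2-dimensional tubes and caps, where this linking obstruction lives. And the phrase ``pick a small disc $d_q$'' followed later by ``inside a small tubular neighborhood $\nbd(S)$, shrunk enough to avoid $A_0$'' pulls in opposite directions: you need $\nbd(S)$ to be \emph{smaller than $d_q$} in the $S^*$ direction so that every $q_i\in d_q$, and this is nowhere stated. Without this, the intersection-number bookkeeping forces the tube to meet the other sheet of $A_0$, and $p_i$ is not removed but merely relocated. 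This is exactly the subtlety that the paper's simultaneous tubing over embedded trees (Lemmas~\ref{lem:tubing}, \ref{lem:linkofembedding}, \ref{lem:treeinplumbing}, \ref{lem:treeinconnectedsurface}) is designed to sidestep: by removing regular neighbourhoods of the trees from both sides and matching the resulting links, all intersections are handled in one coordinated step rather than one-by-one with an implicit linking condition.
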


We remark that if $X$ is closed, and the homology class $[S]$ is primitive, a geometrically dual surface always exists.

Note that both $\sphere^2 \times \sphere^2$ and $\mathds{CP}^{2} \# \overline{\mathds{CP}^{2}}$ have a 0-framed sphere with a geometrically dual sphere. Then, as a corollary of Theorem \ref{thm:0-sphere-intro} we recover the following classical results of Norman and Suzuki, which we already mentioned above.
\begin{corollary}[{\cite{N:slice, S:slice}}]
    \label{cor:NS}
    Every knot is smoothly slice in $\sphere^2 \times \sphere^2$ and $\CP2 \# \bCP2$.
\end{corollary}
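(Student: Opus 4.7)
The plan is to exhibit, in each of the two 4-manifolds, a smoothly embedded 0-framed 2-sphere together with a \emph{spherical} geometrically dual surface, and then to apply Theorem~\ref{thm:0-sphere-intro}, which will give $g_X(K) \leq 0$, i.e.\ every knot is slice.

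For $X = S^2 \times S^2$ the construction is immediate: take $S = S^2 \times \set{\mathrm{pt}}$ and $S^* = \set{\mathrm{pt}} \times S^2$. Since the normal bundle of $S$ is the pullback of the tangent bundle of the second factor under the projection, which is trivial, $S$ is $0$-framed. The two spheres meet transversely in a single point, so $S^*$ is a geometrically dual surface, and $g(S^*) = 0$.

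For $X = \CP2 \# \bCP2$, let $h \in H_2(\CP2)$ and $\bar h \in H_2(\bCP2)$ be the standard generators, represented respectively by a projective line $L \subset \CP2$ (with $L \cdot L = +1$) and a projective line $\bar L \subset \bCP2$ (with $\bar L \cdot \bar L = -1$). Choosing $L$ and $\bar L$ to each pass through the region where the connected sum is performed, one can tube them together along an annulus in the neck to obtain a smoothly embedded sphere $S$ in the class $h + \bar h$. Its self-intersection is $(h+\bar h)^2 = 1 + (-1) = 0$, so $S$ is $0$-framed. For the dual, take a second projective line $L' \subset \CP2$ (disjoint from the connected-sum ball), representing $h$. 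Then $L' \cdot S = h \cdot (h + \bar h) = 1$, and by a small isotopy we may assume this intersection is transverse and geometric. Since $L' \cong S^2$, we have $g(L') = 0$.

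In both cases Theorem~\ref{thm:0-sphere-intro} yields $g_X(K) \leq 0$ for every knot $K \subset S^3$, which is exactly the statement that every knot is smoothly slice in $X$. There is really no obstacle here beyond producing the right pair $(S, S^*)$; the only subtlety worth double-checking is that the tubed surface in $\CP2 \# \bCP2$ is genuinely a sphere (connected sum of two spheres along an annulus is a sphere) and that its framing is additive under the tubing, which is standard.
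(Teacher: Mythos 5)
Your proposal is correct and follows exactly the route the paper intends: exhibit a smoothly embedded $0$-framed sphere with a geometrically dual sphere in each of $S^2 \times S^2$ and $\CP2 \# \bCP2$ (using $S^2 \times \{\mathrm{pt}\}$ with $\{\mathrm{pt}\} \times S^2$, and $L \# \bar L$ with a disjoint line $L'$), then apply Theorem~\ref{thm:0-sphere-intro} to get $g_X(K) \leq 0$. The paper leaves these standard constructions implicit, so you have merely spelled out the details of the same argument.
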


\begin{remark}
\label{rem:topological}
    The careful reader will have noted that we stated Theorems \ref{thm:K3-intro} and \ref{thm:0-sphere-intro} and Corollary \ref{cor:En-intro} only in the smooth case.
    This is because by Freedman's classification theorem every closed, indefinite, simply connected 4-manifold contains a topological $\sphere^2 \times \sphere^2$ or $\CP2 \# \bCP2$ summand, so every knot is topologically slice in it.
\end{remark}

\subsection{Organisation}
In Section \ref{sec:outline} we give a short outline of the proof of Theorem \ref{thm:plumbing}, which is the main result of this paper.
Section \ref{sec:technical} is the technical part of the paper: we introduce locally bipartitioned trees and explain how they can be used to remove double points of normally immersed surfaces. In Section \ref{sec:applications} we prove Theorems \ref{thm:plumbing} and \ref{thm:0-sphere-intro} and their corollaries.

\subsection{Conventions}%\todo{Maybe I'll delete the first two paragraphs, depends on how nice it looks basically (for example I liked that the outline of proof with the pic was entirely on one page)}
%Throughout the paper we denote the closure of the 4-dimensional ball by $\ball^4$, and the 3-sphere by $\sphere^3$. Two-dimensional surfaces will be denoted by letters with normal font (e.g., $\Sigma$, $S$).
%
%Given a \emph{surface} $\Sigma^2$, we denote its connected components by $\Sigma_1, \ldots, \Sigma_c$; we often assume $\Sigma$ to be compact, in which case $\Sigma_i$ is either closed or compact with boundary.
%
Given a link $L$ in $\sphere^3$, $\m(L)$ denotes its mirror image, and if $L$ is oriented, $-L$ denotes the mirror image with orientation reversed on each component.% We denote the number of components of $L$ by $|L|$.

\subsection{Acknowledgements}
We warmly thank Andr\'as Stipsicz for his encouragement and suggestions, and Danica Kosanovi\'c, Lisa Piccirillo, and Arunima Ray for their comments on a draft of this paper.
We also thank the anonymous referees for their suggestions.

MM acknowledges that:
This project has received funding from the European Union’s Horizon 2020 research and innovation programme under the Marie Sk{\l}odowska-Curie grant agreement No.\ 893282.

SM would like to thank the Alfréd Rényi Institute of Mathematics for its hospitality and acknowledges that:
The  research  supporting  this article was partly sponsored  by Central European University Foundation of Budapest (CEUBPF). The theses explained herein  represent  the  ideas  of  the  author, and do not necessarily reflect the views of CEUBPF.

\iffalse
8.  The  acknowledgement  of  the  CEUBPF  grant 
shall be indicated in all written materials, 
advertisements,  press  announcements  and  press 
reports related to the project. CEUBPF asks You 
to ensure that the publications or written 
materials,  which  were  created  directly  as  the 
results of the project, contain the following 
paragraph:  
“The  research  supporting  this  [book,  article, 
dissertation]  was  [partly  or  fully]  sponsored  by 
Central European University Foundation of 
Budapest (CEUBPF). The theses explained 
herein  represent  the  ideas  of  the  author,  and  do 
not necessarily reflect the views of CEUBPF.”
\fi

\section{Outline of the proof of Theorem \ref{thm:plumbing}}
\label{sec:outline}
%We now outline the proof of Theorem \ref{thm:plumbing}.
We remark that an independent proof of Theorem \ref{thm:K3-intro}, which we deduce from Theorem \ref{thm:plumbing} in this paper, will appear in future work of the second named author.

\begin{figure}%[H]
    \includegraphics[width=0.98\textwidth]{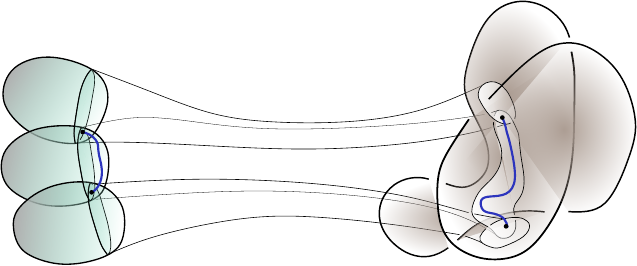}
    \centering
    \caption{Sketch of the argument: for simplicity, the embedded tree consists of an edge connecting 2 vertices.
    We remove neighbourhoods of this tree from both the sphere plumbing (left) and the immersed disk (right), and get 3-chain links as new boundary components.
    We connect these via 3 tubes to get rid of all self-intersections. }
    \label{fig:sketchIntro}
\end{figure}

The key ingredient of our proof is the existence of a plumbing tree $S = S_1 \cup \ldots \cup S_n$ of $n$ spheres smoothly embedded in $X^\circ$.
Given a knot with $c_4 \leq n-1$, there is an immersed disc $D$ with $n-1$ double points in $\ball^4$, and hence in a collar neighbourhood of $\del(X^\circ) = \sphere^3$. The bulk of our technical work lies in finding two disjoint 4-balls $\ball_1$ and $\ball_2$ in $X^\circ$, one containing all the double points of $D$ and the other all the double points of $S$, such that the links $D \cap \del\ball_1$ and $S \cap \del\ball_2$ are mirrors of each other. Once that is done, we remove $D \cap \ball_1$ and $S \cap \ball_2$ from the surfaces and tube what is left of them to obtain a smoothly embedded surface (with no double points).

In our argument, the balls $\ball_1$ and $\ball_2$ are chosen as regular neighbourhoods of the same tree embedded in $D$ and $S$ respectively, so that the vertices of the embedded trees are exactly the double points. We will need to endow the trees with \emph{local bipartitions} (cf.\ Definition \ref{def:btree}) to keep track of the two local sheets of the surface near the double points (cf.\ Definition \ref{def:embedding}).

We show that given a plumbing tree of spheres there is always a suitable locally bipartitioned tree embedded therein (cf.\ Lemma \ref{lem:treeinplumbing}), and that the same plumbing tree can be re-embedded also in the immersed disc $D$ (cf.\ Lemma \ref{lem:treeinconnectedsurface}). To conclude, we note that a locally bipartitioned tree embedded in a normally immersed surface $\Sigma$ completely determines the link $\Sigma \cap \del\ball$ (cf.\ Lemma \ref{lem:linkofembedding}), and therefore we can tube and eliminate all the double points.

\section{Immersed surfaces and locally bipartitioned trees}
\label{sec:technical}

\subsection{Normal immersions}
\label{sec:normal}
Let $X$ be a smooth 4-manifold (possibly with boundary), and let $\widetilde{\Sigma}$ be a compact surface (possibly with boundary).
A smooth immersion $i \colon \widetilde{\Sigma}^2 \to X^4$ is called \emph{normal} if $i(\widetilde{\Sigma}) \cap \del X = i(\del \widetilde{\Sigma})$, $i$ is transverse to $\del X$, and all self-intersections of $i(\widetilde{\Sigma})$ are transverse double points in $\Int X$.
In such a case, we call $\Sigma := i(\widetilde{\Sigma})$ a \emph{normally immersed surface}.
We denote the set of double points by $\DP(\Sigma) \subset \Sigma$.

Following Shibuya \cite{S:clasp}, we define the \emph{4-dimensional clasp number} of a knot $K \subset \sphere^3$ as
\[
c_4(K) :=
\min\set{
|\DP(\Delta)|\,|\,\Delta \subset \ball^4 \mathrm{\,\,normally\,\, immersed\,\, disc}, \del \Delta = K
}.
\]
There is an analogous \emph{topological} (i.e., as opposed to \emph{smooth}) counterpart, defined via locally flat normal immersions, and denoted $c_4^{\mathrm{top}}$ in \cite{FP:clasp}, but we focus on the smooth version $c_4$ (see Remark \ref{rem:topological}).

There are inequalities
\[
g_4(K) \leq c_4(K) \leq u(K),
\]
where $g_4$ is the (smooth) slice genus (in $\ball^4$), and $u$ is the unknotting number. Any non-trivial slice knot $K$ gives an example where $c_4(K) = 0$ and $u(K)\neq 0$. As for the other inequality, there are examples $K_n$ with $g_4(K_n)=n$ and $c_4(K_n) \geq 2n$ \cite{JZ:clasp, FP:clasp}, but it is unknown whether $c_4 \leq 2 g_4$ for all knots.
For relations of $c_4$ with the slicing number and the concordance unknotting number we refer the reader to \cite{OS:clasp}.

\subsection{Tubing self-intersections}
A standard technique in 4-dimensional topology is to remove double points of normally immersed surfaces by tubing. Given two immersed surfaces, and a double point of each of them, one can remove parts of the surfaces contained in small 4-balls centred at the two double points: since the surfaces intersect the boundary 3-spheres in Hopf links, these can be tubed (connected by cylinders) to create a new surface with two fewer double points.

The main idea of this paper is to perform a tubing operation such as the one appearing in \cite{N:slice} over more complex links.

\begin{figure}%[H]
    \includegraphics[width=0.9\textwidth]{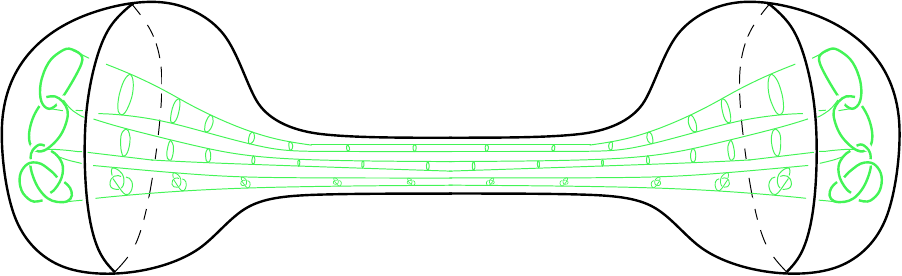}
    \centering
    \caption{Sketch of the tubing operation.}
    \label{fig:multitube}
\end{figure}

\begin{lemma}
\label{lem:tubing}
Let $\Sigma_1, \Sigma_2 \subset X$ be two smooth normally immersed surfaces into a smooth connected 4-manifold $X$. Let $\ball_1$ and $\ball_2$ be two disjoint 4-balls with boundaries $\sphere_1$ and $\sphere_2$, respectively, such that $\ball_1 \cap \Sigma_2 = \ball_2 \cap \Sigma_1 = \varnothing$. If the links $L_1 := \Sigma_1 \cap \sphere_1$ and $L_2 := \Sigma_2 \cap \sphere_2$ are mirrors of each other, we can eliminate all the self-intersections of $\Sigma_1 \cup \Sigma_2$ in $\ball_1 \cup \ball_2$ and build a new normally immersed surface $\Sigma \subset X$ by \emph{tubing} - connecting $\Sigma_1$ and $\Sigma_2$ via disjoint annuli. If the initial surfaces are oriented, and $L_1 = -L_2$ with the induced orientations, then $\Sigma$ inherits a natural orientation too.
%version with single surface
%Let $i \colon \widetilde{\Sigma} \to X$ denote a smooth normal immersion of a (possibly disconnected) surface into a smooth connected 4-manifold $X$, and let $\Sigma=i(\widetilde{\Sigma})$. Let $\ball_1$ and $\ball_2 \subset X$ be two disjoint closed 4-balls with boundaries $\sphere_1$ and $\sphere_2$, respectively. If the links $L_1 := \Sigma \cap \sphere_1$ and $L_2 := \Sigma \cap \sphere_2$ are mirrors of each other, we can eliminate all the self-intersections of $\Sigma$ in $\ball_1 \cup \ball_2$ and build a new normal immersion $i' \colon \widetilde{\Sigma'} \to X$ by tubing. If $\widetilde{\Sigma}$ is oriented, and $L_1 = -L_2$ with the induced orientations, then $\widetilde{\Sigma'}$ inherits a natural orientation too.
\end{lemma}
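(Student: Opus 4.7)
The plan is to connect $\ball_1$ and $\ball_2$ through a $4$-dimensional $1$-handle $N$ disjoint from $\Sigma$, producing a single $4$-ball $W := \ball_1 \cup N \cup \ball_2 \cong \ball^4$ whose boundary $\del W \cong \sphere_1 \# \sphere_2 \cong \sphere^3$ carries $L_1 \sqcup L_2$ as a split link. Since $L_2 = \m(L_1)$, this split link will bound a disjoint union of smoothly embedded annuli $A \subset W$, and I would set $\Sigma' := (\Sigma \setminus \Int W) \cup A$, smoothing corners along $L_1 \cup L_2$.

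To build $N$, first pick a smoothly embedded arc $\gamma \subset X \setminus (\Int \ball_1 \cup \Int \ball_2)$ from a point of $\sphere_1 \setminus L_1$ to a point of $\sphere_2 \setminus L_2$; such $\gamma$ exists because $X$ is connected, and by a general-position perturbation it can be taken disjoint from $\Sigma$ (a generic $1$-dimensional arc misses the $2$-dimensional $\Sigma$ in a $4$-manifold). A sufficiently thin tubular neighbourhood $N \cong \ball^3 \times [0,1]$ of $\gamma$ is then also disjoint from $\Sigma$, and $W = \ball_1 \natural \ball_2 \cong \ball^4$. The sublinks $L_1$ and $L_2$ sit on opposite sides of the belt $2$-sphere of $N$ inside $\del W$, so $L_1 \sqcup L_2$ is indeed split.

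The heart of the argument, and the step I expect to be the main obstacle, is to produce a disjoint union of embedded annuli $A \subset W$ with boundary $L_1 \sqcup L_2$. I would use an explicit reflection model: realise $W$ as the standard $\ball^4 \subset \mathbb{R}^4$, place $L_1 \subset \set{x_1 > 0} \cap \sphere^3$, and define $L_2 := r(L_1) \subset \set{x_1 < 0} \cap \sphere^3$ for the reflection $r(x_1, x_2, x_3, x_4) = (-x_1, x_2, x_3, x_4)$; since $r$ is orientation-reversing on $\sphere^3$, this realises $L_2 = \m(L_1)$. For each $p \in L_1$ the straight segment $\set{(1-t) p + t\, r(p) : t \in [0,1]}$ lies in $\ball^4$ by convexity, and the union of these chords assembles into a disjoint collection of smoothly embedded annuli from $L_1$ to $L_2$: two distinct chords can share a point only if their $x_2, x_3, x_4$ coordinates all coincide, which forces them to share an endpoint and hence to agree. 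Because the isotopy type of a split link in $\sphere^3$ is determined by its pair of sublinks, an ambient isotopy of $\sphere^3$ carries this reflection configuration to $L_1 \sqcup L_2 \subset \del W$, and extending the isotopy to $\ball^4$ transports the annuli into $W$.

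The resulting $\Sigma'$ is then normally immersed with $\del \Sigma' = \del \Sigma$, and its double-point set is contained in $\Sigma \setminus (\ball_1 \cup \ball_2)$; hence all self-intersections originally in $\ball_1 \cup \ball_2$ have been eliminated. For the oriented statement, each annulus in the reflection model inherits the product orientation of $L_1^{(i)} \times [0,1]$; using that $r$ is orientation-reversing on $\sphere^3$, a direct check of signs shows that the induced boundary orientations match those coming from $\Sigma \setminus \Int W$ along $L_1 \cup L_2$ exactly under the hypothesis $L_1 = -L_2$, so $\Sigma'$ inherits a natural orientation extending that of $\Sigma$.
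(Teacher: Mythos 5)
Your proof is correct and follows essentially the same route as the paper's: join $\ball_1$ and $\ball_2$ by a $1$-handle (i.e., a regular neighbourhood of an arc disjoint from $\Sigma$), note that the resulting $4$-ball meets $\Sigma$ in the split link $L_1 \sqcup L_2$, cap this with disjoint properly embedded annuli, and glue them to $\Sigma$ minus the two balls. The only difference is that you make the existence of the annuli explicit via the chord/reflection model, where the paper simply asserts that $L_1 \sqcup \m(L_1)$ bounds $|L_1|$ disjoint annuli in the ball.
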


\iffalse
\begin{lemma}
\label{lem:tubing}
For $j=1,2$, let $i_j \colon \widetilde{\Sigma_j} \to X$ denote a smooth normal immersion of the surface $\widetilde{\Sigma_j}$ into a smooth connected 4-manifold $X$, and let $\Sigma_j=i_j(\widetilde{\Sigma_j})$. Suppose that $\Sigma_1$ and $\Sigma_2$ are disjoint, and let $\ball_1$ and $\ball_2 \subset X$ be two disjoint closed 4-balls with boundaries $\sphere_1$ and $\sphere_2$, respectively, such that $\ball_1 \cap \Sigma_2 = \ball_2 \cap \Sigma_1 = \varnothing$. If the links $L_1 := \Sigma_1 \cap \sphere_1$ and $L_2 := \Sigma_2 \cap \sphere_2$ are mirrors of each other, we can eliminate all the self-intersections of $\Sigma_1 \cup \Sigma_2$ in $\ball_1 \cup \ball_2$ and build a new normal immersion $i' \colon \widetilde{\Sigma'} \to X$ by tubing. If $\widetilde{\Sigma_j}$ are oriented, and $L_1 = -L_2$ with the induced orientations, then $\widetilde{\Sigma'}$ inherits a natural orientation too.
%version with single surface
%Let $i \colon \widetilde{\Sigma} \to X$ denote a smooth normal immersion of a (possibly disconnected) surface into a smooth connected 4-manifold $X$, and let $\Sigma=i(\widetilde{\Sigma})$. Let $\ball_1$ and $\ball_2 \subset X$ be two disjoint closed 4-balls with boundaries $\sphere_1$ and $\sphere_2$, respectively. If the links $L_1 := \Sigma \cap \sphere_1$ and $L_2 := \Sigma \cap \sphere_2$ are mirrors of each other, we can eliminate all the self-intersections of $\Sigma$ in $\ball_1 \cup \ball_2$ and build a new normal immersion $i' \colon \widetilde{\Sigma'} \to X$ by tubing. If $\widetilde{\Sigma}$ is oriented, and $L_1 = -L_2$ with the induced orientations, then $\widetilde{\Sigma'}$ inherits a natural orientation too.
\end{lemma}
\fi

\begin{proof}
    Pick an arc $\gamma \subset X \setminus (\ball_1 \cup \ball_2 \cup \Sigma_1 \cup \Sigma_2)$ connecting $\sphere_1$ and $\sphere_2$.
    %The arc can be chosen to connect the two sphere boundaries of $X \setminus (\ball_1 \cup \ball_2)$ and avoid $\Sigma$ by a transversality argument.
    If $\nbd(\gamma)$ denotes a small regular neighbourhood of $\gamma$, then $\ball_1 \cup \ball_2 \cup \nbd(\gamma)$ is a 4-ball $\ball_3$, and the link $(\Sigma_1 \cup \Sigma_2) \cap \del\ball_3$ is $L_1 \sqcup L_2$.
    
    If $L_1 = \m(L_2)$, then $L_1 \sqcup L_2$ bounds $|L_1|$ disjoint annuli in $\ball_3$, which can be glued to $(\Sigma_1 \cup \Sigma_2) \setminus (\ball_1 \cup \ball_2)$ to obtain $\Sigma'$.
    %The usual tubing involving two points connects 2 Hopf links via two disjoint tubes, whereas we use multiple tubes.
    See Figure \ref{fig:multitube} for an illustration.

    If $\Sigma_1$ and $\Sigma_2$ are oriented and $L_1 = - L_2$, then the annuli can be oriented coherently and the result of the gluing is oriented too. %\todo{here there was $\tilde{\Sigma_1}$ and $\tilde{\Sigma_2}$ but I just removed tildes}
\end{proof}

We will apply Lemma \ref{lem:tubing} to pairs of 4-balls arising as regular neighbourhoods of trees embedded in a normally immersed surface. The next subsection is devoted to defining and explaining the objects we consider.

\subsection{Locally bipartitioned trees}

Given a graph $\Gamma$, we denote the set of its vertices by $V(\Gamma)$ and the set of its edges by $E(\Gamma)$. Given a vertex $v \in V(\Gamma)$, we denote the set of edges adjacent to $v$ by $E(v)$.

\begin{definition}
\label{def:btree}
A \emph{locally bipartitioned tree} $\left(T, \left\{\Pi_v\right\}_{v \in V(T)}\right)$ is given by
\begin{itemize}
    \item a finite tree $T$ and,
    \item for each vertex $v \in V(T)$, a set $\Pi_v = \set{A_v, B_v}$ which gives a bipartition of $E(v)$ (i.e., $E(v) = A_v \sqcup B_v$).
\end{itemize}

Given $v \in V(T)$ and $e \in E(v)$, we let $\pi_v(e) \in \Pi_v$ denote the element of the bipartition containing the edge $e$.
\end{definition}

\begin{remark}
A tree $T$ together with a bicolouring $\bicol \colon E(T) \to \set{0,1}$ of its edges naturally induces a locally bipartitioned tree, by considering for each $v \in V(T)$ the partition of $E(v)$ defined by the colour of the edge.

Vice versa, given a locally bipartitioned tree $\LBT$, it is always possible to find a bicolouring of $T$ that induces the local bipartitions $\Pi_v$. (This follows by induction, using the fact that each non-empty tree has a leaf.)

In all statements and proofs we follow the locally bipartitioned perspective, but in the figures for simplicity we always represent locally bipartitioned trees by bicolourings, and we use red and blue for the two colours. 
\end{remark}

Given a locally bipartitioned tree $\LBT$, in Definition \ref{def:associatedlink}, we will associate a link $L\LBT $ in $\sphere^3$ with it.
Its geometric meaning is the following: given a \emph{suitable embedding} of $\LBT $ into an immersed surface $\Sigma \subset X^4$ (as defined in Definition \ref{def:embedding}), $L\LBT $ is the link defined by intersecting $\Sigma$ with the boundary of a regular neighbourhood of $T$ (see Lemma \ref{lem:linkofembedding}).

\begin{definition}
\label{def:associatedlink}
Given a locally bipartitioned tree $\LBT $, its \emph{associated link} is the unoriented link $L\LBT $ in $\sphere^3$ defined in two steps as follows:
\begin{enumerate}
    \item for each vertex $v \in V(T)$, take a Hopf link with the two components labelled by the two elements of $\Pi_v$;
    \item for each edge $e \in E(T)$, connecting vertices $v$ and $w$, connect sum the two Hopf links associated with $v$ and $w$ at the components labelled $\pi_v(e)$ and $\pi_w(e)$.
\end{enumerate}
\end{definition}
See Figure \ref{fig:associatedlink} for an illustration of a locally bipartitioned tree and its associated link.

\begin{figure}%[H]
    \includegraphics[width=0.9\textwidth]{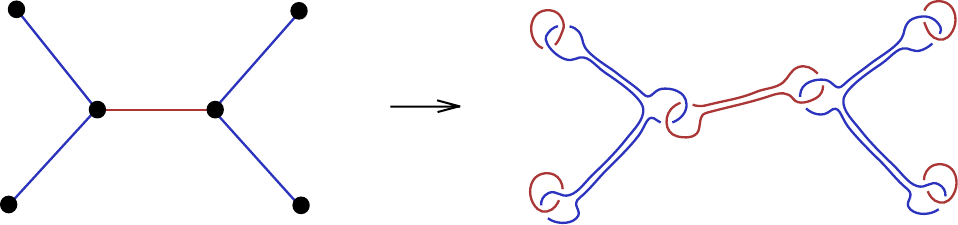}
    \centering
    \caption{An example of a locally bipartitioned tree and its associated link in $\sphere^{3}$.}
    \label{fig:associatedlink}
\end{figure}

\begin{remark}
\label{rem:mirror}
    Let $\LBT$ be any locally bipartitioned tree. As an (unoriented) link, $L\LBT $ is isotopic to its mirror image. This can be easily checked for the Hopf link, and for the general case it follows from the commutativity of the connected sum operation.
\end{remark}

\begin{definition}
\label{def:embedding}
    Let $\LBT $ be a locally bipartitioned tree and $\Sigma^2 \subset X^4$ be a normally immersed surface, and let $\DP(\Sigma)$ denote the set of double points of $\Sigma$.
    A \emph{suitable embedding} of $\LBT $ into $\Sigma$ is an embedding
    \[
    f \colon T \to \Sigma
    \]
    such that
    \begin{itemize}
    \item $f^{-1}(\DP(\Sigma)) = V(T)$, and
    \item for each vertex $v \in V(T)$, any two edges in $E(v)$ from the same element of the bipartition $\Pi_v$ map into the same local component of $\Sigma$, whereas any two edges from the two different elements of the bipartition $\Pi_v$ map into two different local components of $\Sigma$.
    \end{itemize}
\end{definition}
A plumbing tree of surfaces is one example of an immersed surface, see Figure \ref{fig:embeddedGraph} for an illustration of a suitable embedding in this case.

%    \begin{definition}
%    \label{thePartition}
%        Suppose we are given an immersed surface or collection of intersecting surfaces denoted by $\Sigma$. If we take an embedding of a graph into $\Sigma$, we say it is a \emph{proper embedding} if each (self-)intersection point of $\Sigma$ is either an embedding of a vertex, or contains no points of the graph embedding.
%        
%        For a proper embedding of a graph define a local partition at each vertex in the following way:
%        \begin{itemize}
%            \item If the vertex is not a (self-)intersection point of $\Sigma$, all the edges starting at this vertex are in one group
%            \item If the vertex is a (self-)intersection point, locally it is an intersection of 2 transverse planes/sheets. We divide edges starting at that vertex so that some two are in the same local partition group if and only if they are embedded in the same local sheet of the surface.
%     \end{itemize}
%
%    \end{definition}

\begin{figure}%[H]
    \includegraphics[width=\textwidth]{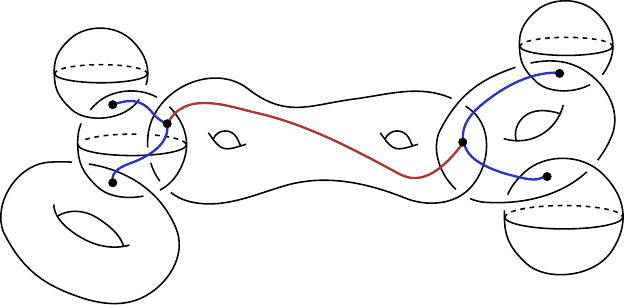}
    \centering
    \caption{Sketch of a surface plumbing and a suitable embedding of the locally bipartitioned tree from Figure \ref{fig:associatedlink}.}
    \label{fig:embeddedGraph}
\end{figure}

%\SM{This is not a final figure, I thought to make 'Hopf link intersections' between surfaces but I put it like this so we can decide if that's a good idea first (cause it takes me quite some time to delete intersections, putting different surfaces in layers and so on. The good thing is everything is taking less time as I am figuring Inkscape out}

\begin{lemma}
\label{lem:linkofembedding}
    Let $\Sigma^2 \subset X^4$ be a normally immersed surface into a smooth 4-manifold, and let $f$ be a suitable embedding of a locally bipartitioned tree $\LBT $ into $\Sigma$.
    Then the link of the embedding, i.e., the intersection of $\Sigma$ with $\del\nbd(f(T)) \cong \sphere^3$ is exactly the associated link $L\LBT $.
\end{lemma}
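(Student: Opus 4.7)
The plan is to build up a regular neighbourhood of $f(T)$ in $X^4$ handle by handle, using the tree structure of $T$, and to verify at each stage that the intersection with $\Sigma$ matches the corresponding step in the construction of the associated link in Definition \ref{def:associatedlink}.

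First, I would observe that a regular neighbourhood $\nbd(f(T))$ is a 4-ball, since $T$ is a finite tree (hence contractible) and $X$ is a 4-manifold. I would then decompose $\nbd(f(T))$ as the union of small 4-ball neighbourhoods $\nbd(f(v))$ around each vertex $v\in V(T)$, together with 1-handles $\nbd(f(e))$ thickening each edge $e\in E(T)$. This matches Definition \ref{def:associatedlink}, which also assigns data to vertices and then glues along edges.

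Next, I would analyse the local model at a vertex. For each $v$, the point $f(v)$ is a transverse double point of $\Sigma$, so there is a standard local model in which a small ball $\nbd(f(v))$ is a 4-ball whose boundary 3-sphere meets $\Sigma$ in a Hopf link, with one component for each local sheet of $\Sigma$ at $f(v)$. The condition $f^{-1}(\DP(\Sigma))=V(T)$ in Definition \ref{def:embedding} guarantees that $\Sigma$ is embedded along each edge away from the vertices, so outside of $\bigcup_v \nbd(f(v))$ the surface $\Sigma$ has no self-intersections. The second condition in Definition \ref{def:embedding} ensures that the two components of this Hopf link are canonically labelled by the two elements of $\Pi_v$: the component corresponding to a local sheet $A_v$ (resp.\ $B_v$) is hit by the edges in $A_v$ (resp.\ $B_v$).

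Now for the inductive step, I would order the edges of $T$ and attach their thickenings one at a time, proving that after attaching the first $k$ of them the boundary link equals the associated link of the sub-locally-bipartitioned-tree built so far. For an edge $e$ joining $v$ and $w$, the arc $f(e)$ lies in the local sheet of $\Sigma$ at $f(v)$ labelled $\pi_v(e)$ and in the local sheet at $f(w)$ labelled $\pi_w(e)$; its neighbourhood $\nbd(f(e))$ is a 4-dimensional 1-handle meeting $\Sigma$ in a 2-dimensional band. Attaching this 1-handle to the union of the balls at $v$ and $w$ performs a boundary connected sum of 3-spheres, and the band in it joins precisely the component labelled $\pi_v(e)$ of the Hopf link at $v$ to the component labelled $\pi_w(e)$ of the Hopf link at $w$. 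This is exactly the operation described in step (2) of Definition \ref{def:associatedlink}.

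The main obstacle, and the only point that needs genuine care, is verifying that the band contributed by each edge realises the \emph{connected sum} of labelled Hopf links rather than an unintended twist, linking, or wrong matching of components. This amounts to choosing the 1-handle thickening $\nbd(f(e))$ inside a standard local model of $\Sigma$ along $f(e)$ (which exists because $f(e)$ lies in the smooth embedded part of $\Sigma$ and is contained in a chosen local sheet at each endpoint), and then observing that in this standard model the band is unknotted and unlinked with the rest, so the boundary effect is exactly a connected sum along the component prescribed by the bipartition. Since $T$ is a tree, no cycle of edges is ever closed up, so no extra linking among the bands can arise. Applying the inductive step for every edge completes the identification of $\Sigma\cap\partial\nbd(f(T))$ with $L\LBT$.
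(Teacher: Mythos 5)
Your proposal is correct and follows essentially the same argument as the paper: decompose the regular neighbourhood of $f(T)$ into small 4-balls around the vertices (each meeting $\Sigma$ in a Hopf link labelled by $\Pi_v$) and tubes/1-handles along the edges, then observe that each edge handle performs a connected sum of the two Hopf components singled out by the local bipartitions. You spell out the induction over edges and the verification that the bands are unknotted and unlinked a bit more explicitly than the paper does, but it is the same decomposition and the same reasoning.
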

%
%
%    \begin{lemma}
%    \label{associatedLinkLemma}
%        Suppose we are given an immersed surface or collection of intersecting surfaces $\Sigma$, and we have a proper embedding of a \emph{tree} into $\Sigma$. Under these circumstances, the link we get by intersecting $\Sigma$ with the boundary of a small neighbourhood of the embedded tree, is exactly the associated link to the same abstract tree locally partitioned as explained in Definition \ref{thePartition}.
%    \end{lemma}

\begin{proof}
For simpler notation, in the proof of this lemma we will identify the tree $T$ with the image of the embedding $f$, so embedded vertices $f(v)$ will be denoted by just $v$, and embedded arcs $f(e)$ will be denoted by $e$.

A small neighbourhood of a tree is a 4-ball $\mathds{B}_{0}$, and we only need to identify the link we get by intersecting $\Sigma$ with the boundary 3-sphere $\del\ball_0$.
%Previous definitions were devised exactly so the proof of this lemma will be natural.
%
We visualise the whole 4-ball neighbourhood $\ball_0$ of the tree $T$ as the union of 4-balls $\ball_v$ centred at each vertex $v$, and 4-dimensional solid tubes joining two 4-balls $\ball_{v_i}$ and $\ball_{v_j}$, one for each edge.
Each such solid tube is a standard neighbourhood of an edge $e$ from $v_i$ to $v_j$ minus the balls $\ball_{v_i}$ and $\ball_{v_j}$.
%
%For a vertex which is not a (self-)intersection of $\Sigma$, the link we get by intersecting the surface with a boundary of a small 4-ball is just an unknot in $\mathds{S}^{3}$ and this is the (1-component) link that is, as desired, the assigned link for a vertex.

Near the vertex $v$ the surface $\Sigma$ locally looks like two planes intersecting transversely, hence $\Sigma \cap \del \ball_v$ is a Hopf link, with its two components lying on the two different local sheets of $\Sigma$.
%Again, in this case as well, this is exactly the associated link for such a vertex as it has a non-trivial local partition coming from tho intersecting sheets.

Now consider two vertices $v_1$ and $v_2$ that are connected by an edge $e \subset \Sigma$. The first bullet point in Definition \ref{def:embedding} ensures that $e$ does not intersect any double points except $v_1$ and $v_2$.
The intersection of $\Sigma$ with the boundary of a small 4-ball neighbourhood $\nbd(e)$ of $e$ in $X \setminus (\ball_{v_1} \cup \ball_{v_2})$ consists of two arcs parallel to the edge, and two smaller arcs, one in each $\del\ball_{v_i}$, shared with the Hopf component at $v_i$ with label $\pi_{v_i}(e)$.
Thus, removing $\nbd(e)$ results into connect summing the two Hopf links along the components labelled $\pi_{v_1}(e)$ and $\pi_{v_2}(e)$.
\end{proof}

\subsection{From surface plumbings to locally bipartitioned trees}
In the next lemma we show how to find a large locally bipartitioned tree suitably embedded into a plumbing tree of surfaces. See Figure \ref{fig:embeddedGraph} for an illustration.
%we should check if the term properly embedded is actual general term for when embedding graphs but any term is good as long as it's clear

%We can potentialy place Lemma \ref{lem:treeinplumbing} inside of the proof of Theorem \ref{thm:plumbing} but we have time to think about that

\begin{lemma}
\label{lem:treeinplumbing}
    Let $\Sigma = \Sigma_1 \cup \ldots \cup \Sigma_n$ be a plumbing tree of $n$ closed surfaces in a smooth 4-manifold $X^4$.
    Then there exists a locally bipartitioned tree $\LBT$ with exactly $n-1$ vertices and a suitable embedding $f \colon T \to \Sigma$.
    
    Moreover, given any such $\LBT$ and $f$:
    \begin{itemize}
    \item $f(T)$ contains all the $n-1$ double points of $\Sigma$;
    \item for each $i=1, \ldots, n$, $\Sigma_i \cap f(T)$ is contractible.
    \end{itemize}
\end{lemma}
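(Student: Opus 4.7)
The plan is to prove existence by induction on $n$, and then to establish the two ``moreover'' items by a cardinality argument and a global Euler-characteristic count on the subgraphs $T_i := \Sigma_i \cap f(T)$. For existence, the base case $n \leq 2$ is trivial: $T$ is the empty tree or a single vertex mapped to the unique double point. For $n \geq 3$, pick a leaf $v_n$ of the plumbing tree $P$ with neighbour $v_j$; since $P$ is a tree with at least three vertices, $v_j$ has $P$-degree $\geq 2$, so $\Sigma_j$ contains $p := \Sigma_n \cap \Sigma_j$ and at least one other double point $q$. Apply the induction hypothesis to the sub-plumbing $\Sigma' := \Sigma_1 \cup \cdots \cup \Sigma_{n-1}$ to obtain a suitably embedded tree $T'$ containing $q$ as a vertex, then extend $T'$ to $T$ by attaching a new leaf at $p$ joined to $q$ by an arc in $\Sigma_j$ that avoids all other double points. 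The local bipartition at each vertex $v$ of $T$ partitions the incident edges according to which of the two local sheets of $\Sigma$ at $v$ they lie in.

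The ``moreover'' part concerns any such $\LBT$ and $f$. The first bullet follows by cardinalities: $f^{-1}(\DP(\Sigma)) = V(T)$, combined with $|V(T)| = n-1 = |\DP(\Sigma)|$ and injectivity of $f$ on vertices, forces $f(V(T)) = \DP(\Sigma)$.

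For the second bullet I would run a global count. Since every edge of $f(T)$ has connected interior contained in $\Sigma \setminus \DP(\Sigma) = \bigsqcup_i (\Sigma_i \setminus \DP(\Sigma))$, it lies in a single $\Sigma_i$; thus $T_i$ is a subgraph of the tree $f(T)$, and hence a forest. Writing $d_i$ for the $P$-degree of $v_i$ (equivalently $|V(T_i)|$), $e_i := |E(T_i)|$, and $c_i$ for the number of connected components of $T_i$, the forest relation reads $e_i = d_i - c_i$. Summing gives $\sum_i d_i = 2(n-1)$ (each double point sits on two surfaces) and $\sum_i e_i = |E(T)| = n-2$, so $\sum_i c_i = n$. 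Since $P$ is connected and $n \geq 2$, every $d_i \geq 1$, hence $c_i \geq 1$; together with $\sum_i c_i = n$ this forces $c_i = 1$ for every $i$, so each $T_i$ is a tree and in particular contractible.

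I expect the second bullet to be the main obstacle: connectedness of an individual $T_i$ cannot be argued locally, since a would-be extra component of $T_i$ could a priori be ``glued back in'' through other surfaces of the plumbing. It must instead be extracted from the simultaneous identities $\sum_i d_i = 2(n-1)$ and $\sum_i e_i = n-2$ together with the sharp lower bound $c_i \geq 1$ coming from connectedness of $P$. The existence and the first bullet are routine by comparison.
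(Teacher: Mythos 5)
Your existence argument and the first ``moreover'' bullet match the paper's proof essentially verbatim: the same induction peeling off a leaf of the plumbing graph, with local bipartitions read off from the local sheets, and the same counting of vertices versus double points for the first bullet.

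For the second bullet, however, your argument is a genuinely different (and correct) one. The paper argues directly and geometrically: given two double points $p,q \in \Sigma_i$, the unique path in $f(T)$ joining them must have the property that its intersection with $\Sigma_i$ is connected, because $\Sigma$ is a plumbing \emph{tree} --- if the path left $\Sigma_i$ and re-entered, it would trace a non-trivial closed walk in the plumbing graph $P$, contradicting that $P$ is a tree. This is a path-by-path argument using the tree structure of $P$; it shows directly that each $T_i$ is connected. You instead run the global Euler-characteristic count: since each $T_i = \Sigma_i \cap f(T)$ is a subforest of $f(T)$ (every edge of $f(T)$ has its interior in exactly one $\Sigma_j$, as the interiors avoid $\DP(\Sigma)$), the forest relation $e_i = d_i - c_i$ combined with $\sum d_i = 2(n-1)$ and $\sum e_i = n-2$ gives $\sum c_i = n$, and $c_i \geq 1$ forces $c_i = 1$. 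Both arguments are correct. Yours has the virtue of being entirely combinatorial and self-contained --- no claim about paths in plumbings is needed --- while the paper's is shorter once one accepts the topological fact. Your prediction that the second bullet ``cannot be argued locally'' is a little too pessimistic: the paper's path argument does the job, exploiting that $P$ is a tree in a more direct way. One small point you should tighten in the existence step: when you attach the new arc in $\Sigma_j$ from $p$ to $q$, you must avoid not only the remaining double points but also the already-embedded forest $f'(T') \cap \Sigma_j$, except at the endpoint $q$; this is possible because removing a forest and finitely many points from the closed surface $\Sigma_j$ leaves a connected set.
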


\begin{proof}
    Leaving the local bipartitions aside for a moment, the fact that there is a tree $T$ with $n-1$ vertices and an embedding $f$ such that $f^{-1}(\DP(\Sigma)) = V(T)$ follows by a simple induction argument on $n$, using the fact that any plumbing tree has a leaf.
    
    Then, once we have such a tree and such an embedding, we choose as local bipartitions $\Pi_v$ exactly the ones induced by the local sheets of $\Sigma$, so that the embedding $f$ is automatically suitable.
    
    Now suppose that we are given a locally bipartitioned tree $\LBT$ with exactly $n-1$ vertices and a suitable embedding $f \colon T \to \Sigma$. Then the first bullet point follows from the fact that $T$ has exactly $n-1$ vertices, $\Sigma$ has exactly $n-1$ double points, and $f$ is one-to-one.
    Finally, the second bullet point follows from the fact that $\Sigma$ is a plumbing \emph{tree}, so if two double points on $\Sigma_i$ are connected by a path in $\Sigma \cap f(T)$, that path intersects $\Sigma_i$ in a connected subset of $f(T)$, which is itself contractible.
\end{proof}

%\begin{proof}
%    The proof is a simple argument by induction. For $n=1$ choose $T = \varnothing$, and for $n=2$ choose $T = \set{v}$ consisting of a single vertex with no edges, with $f(v) = \Sigma_1 \cap \Sigma_2$. 
%    
%    Thus, for $n>2$, consider a plumbing tree of $n$ surfaces, and let $\Sigma_n$ denote a leaf of such a tree, connected to a vertex $\Sigma_{j}$, $j<n$. By inductive hypothesis there is a bicolored tree $(T', \bicol')$ suitably embedded into the plumbing subtree $\cup_{i=1}^{n-1} \Sigma_i$ via a map $f'$.
%    
%    Since $n>2$, there must exist a double point $\Sigma_j \setminus \Sigma_n$, and by inductive hypothesis this must be $f'(w)$ for some vertex $w \in V(T')$.
%    We define $T$ by setting $V(T) = V(T') \sqcup \set{v}$, and $E(T) = E(T') \sqcup \set{e}$, where $e$ is a vertex connecting $v$ and $w$. We extend the embedding $f'$ to an embedding $f \colon T \to \Sigma$ by sending $v$ to $\Sigma_j \cap \Sigma_n$ and $e$ to an arc $\gamma \subset \Sigma_{j} \setminus f'(T')$. (Note that this can be done because $\Sigma_{j} \cap f'(T')$ is connected, hence contractible, being a subset of a tree.) The value $\bicol(e)$ is chosen so that the embedding is suitable.
%
%    Finally, that $\Sigma_i \cap f(T)$ must be connected follows from the fact that $\Sigma$ is a plumbing \emph{tree}, so if two double points on $\Sigma_i$ are connected by a path in $\Sigma$, that path intersects $\Sigma_i$ in a connected set.
%\end{proof}

\subsection{Locally bipartitioned trees in immersed connected surfaces}

With a variation of the previous argument, we can show the following result.

\begin{lemma}
\label{lem:treeinconnectedsurface}
    Let $X$ be a smooth 4-manifold (possibly with boundary), let $i \colon \widetilde\Sigma \to X^4$ be a normal immersion of a connected surface with $m$ self-intersections, and let $\Sigma = i(\widetilde\Sigma)$. Then any locally bipartitioned tree $\LBT $ with $\ell \leq m$ vertices can be suitably embedded in $\Sigma$.
\end{lemma}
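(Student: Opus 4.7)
We proceed by induction on $\ell$. The base case $\ell = 1$ is immediate: the tree has a single vertex and no edges, so we set $f(v_0)$ to any of the $m \geq 1$ double points, with an arbitrary identification of $\Pi_{v_0}$ with the two local sheets (the bipartition condition is vacuous). For the inductive step, pick a leaf $v_0$ of $T$, let $e_0$ be its unique incident edge, and let $v_1$ denote the other endpoint. Apply the inductive hypothesis to $T' := T \setminus \{v_0, e_0\}$, endowed with the bipartition inherited from $T$, to obtain a suitable embedding $f' \colon T' \to \Sigma$. Since $f'(V(T'))$ uses only $\ell - 1 \leq m - 1$ double points, we may pick an unused $p \in \DP(\Sigma) \setminus f'(V(T'))$; the plan is to set $f(v_0) := p$ and realise $e_0$ as a suitable arc $\gamma$ in $\Sigma$ from $f'(v_1)$ to $p$.

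To construct $\gamma$, we work in the preimage $\widetilde\Sigma$. Let $\widetilde T' := i^{-1}(f'(T'))$, let $\tilde v_1$ be the preimage of $f'(v_1)$ corresponding to the element of $\Pi_{v_1}$ containing $e_0$ (this preimage is determined by $f'$ when the opposite side of $\Pi_{v_1}$ is non-empty in $T'$, and is an arbitrary choice otherwise), let $\tilde p_1, \tilde p_2 \in \widetilde\Sigma$ be the two preimages of $p$, and let $P := i^{-1}(\DP(\Sigma)) \setminus V(\widetilde T')$ be the finite set of preimages of unused double points. We seek an arc $\tilde\gamma \subset \widetilde\Sigma$ from $\tilde v_1$ to one of $\tilde p_1, \tilde p_2$ whose interior is disjoint from $\widetilde T' \cup P$.

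The crucial observation is that $\widetilde T'$ is an embedded \emph{forest} in $\widetilde\Sigma$: the projection $\widetilde T' \to T'$ is a bijection on edges, so any cycle in $\widetilde T'$ would project to a cycle in the tree $T'$, which is impossible. A regular neighbourhood of an embedded tree in a surface is a disk, so removing one from a connected surface preserves connectedness; iterating over the components of $\widetilde T'$, and then removing the finite set $P$, we see that $(\widetilde\Sigma \setminus \widetilde T') \setminus P$ is a connected open $2$-manifold. Near $\tilde v_1$, a small disk minus the arcs of $\widetilde T'$ emanating from $\tilde v_1$ decomposes into open sectors, each lying in this connected set; near $\tilde p_1$ a small punctured disk lies in it. Joining a point of such a sector to a point of this punctured disk via a path in $(\widetilde\Sigma \setminus \widetilde T') \setminus P$, and extending by short arcs to $\tilde v_1$ and $\tilde p_1$, produces the desired $\tilde\gamma$.

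Projecting, $\gamma := i(\tilde\gamma)$ is an embedded arc in $\Sigma$ from $f'(v_1)$ to $p$ with interior disjoint from $\DP(\Sigma)$ and from $f'(T') \setminus \{f'(v_1)\}$. We extend $f'$ by $f(v_0) := p$ and $f(e_0) := \gamma$, choosing the identification of $\Pi_{v_0}$ with the two local sheets at $p$ so that $\pi_{v_0}(e_0)$ matches the local sheet containing $\gamma$ near $p$; the resulting $f \colon T \to \Sigma$ is then a suitable embedding. The main technical hurdle is the connectedness of $(\widetilde\Sigma \setminus \widetilde T') \setminus P$, which reduces via the forest structure of $\widetilde T'$ to the standard fact that removing an embedded tree from a connected surface preserves connectedness.
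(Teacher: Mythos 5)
Your proof is correct and follows essentially the same strategy as the paper's: induct on $\ell$ by deleting a leaf, pass to the domain $\widetilde\Sigma$ where the preimage of the partially embedded tree is an embedded forest, and join the preimage of $f'(v_1)$ selected by $\pi_{v_1}(e_0)$ to a preimage of an unused double point by an arc in $\widetilde\Sigma$ avoiding that forest and the remaining double-point preimages. One small imprecision in your parenthetical: you claim $\tilde v_1$ is determined by $f'$ exactly when the \emph{opposite} side of $\Pi_{v_1}$ meets $E_{T'}(v_1)$ and is arbitrary otherwise, but it is equally determined when the side $\pi_{v_1}(e_0)$ itself meets $E_{T'}(v_1)$ (take the common endpoint of the lifts of those edges); the choice is genuinely arbitrary only when $v_1$ is isolated in $T'$, i.e.\ when $\ell=2$, which the paper treats as a second base case. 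Since your actual prescription --- take the preimage corresponding to the element of $\Pi_{v_1}$ containing $e_0$ --- is the correct one, this does not affect the validity of the argument, but as written the parenthetical would misdirect a reader in the case that all of $v_1$'s $T$-edges lie on one side of the bipartition.
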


\begin{proof}
    The proof is by induction on $\ell$. The base cases $\ell=1$ and $\ell=2$ are straightforward.
    %
    %For $\ell=1$ choose $T = \set{v}$ consisting of a single vertex with no edges, with $f$ mapping $v$ to a double point of $\Sigma$. 
    %
    %If $\ell=2$, $T$ has a single edge $e$ connecting its two vertices $v$ and $w$. Choose a generic arc connecting two double points of $\Sigma$, and map $e$ to this arc.
    %There is no need to check the compatibility with the local bipartitions because there is only one partition of $E(v)$ and of $E(w)$.
    
    For $\ell > 2$ start by choosing a leaf $v$ of $T$, and let $e$ be the edge connecting it to some vertex $w$. We define $T'$ by setting $V(T') = V(T) - \set{v}$, and $E(T') = E(T) - \set{e}$.
    Then $(T', \left\{\Pi_v\right\})$ satisfies the induction hypothesis so we can find a suitable embedding $f' \colon T' \to \Sigma$; note that the number of self-intersections of $\Sigma$ in the image of $f'$ is strictly less than $m$.
    We now wish to define $f \colon T \to \Sigma$ by setting $f|_{T'} = f'$ and so that the remaining vertex $v$ maps to a self-intersection of $\Sigma$ not in the image of $f'$. The only subtle point is how to define $f(e)$ so that the bipartition of $E(w)$ coming from the local sheets of $\Sigma$ is exactly $\Pi_w$.

    %\SM{I just put Pi z =... and the set with z1 tilde and z2 tilde, cause I was confused for a minute when first reading it, you change it if you want. I also added the Az and Bz in the figure caption}
    If we let $i \colon \widetilde\Sigma \to X^4$ be the normal immersion, then for any vertex $z\in V(T')$ we can identify its two preimages $\set{\widetilde{z_1},\widetilde{z}_2} = i^{-1}(f'(z)) \subset \widetilde{\Sigma}$ with the partition $\Pi_z = \set{A_z, B_z}$, so that the lift $i^{-1}(f'(e'))$ of an edge $e' \in E(z)$ has $\pi_z(e')$ as one of its endpoints.
    (The careful reader will note that for this point we need $\ell>2$.)
    See Figure \ref{fig:preimage} for an illustration.

    \begin{figure}
        \centering
        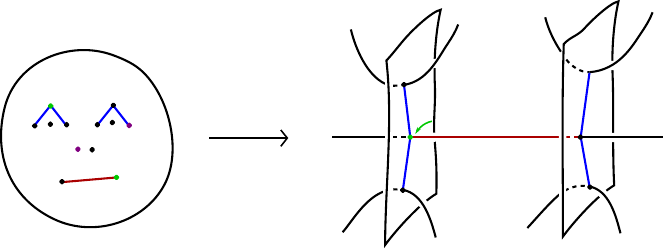
        \caption{The figure shows a normal immersion $i$ of a connected surface $\widetilde{\Sigma}$. The image $\Sigma$ is represented by the black lines and planes on the right. The tree $T$ from Figure \ref{fig:associatedlink} is suitably embedded in $\Sigma$. On the left we drew the pre-image of $T$ through $i$.
        The pre-image of a vertex $f(z)$ is a two-point set $\set{\widetilde{z_1},\widetilde{z}_2} \subset \widetilde{\Sigma}$, which we identify with $\Pi_z = \set{A_z, B_z}$.
        }
        \label{fig:preimage}
    \end{figure}
    
    From here one can see that the preimage of the embedded tree $i^{-1}(f'(T'))$ consists of a disjoint union of contractible components embedded in $\Int \widetilde\Sigma$. (In fact, upon choosing a compatible bicolouring of $T'$, every maximal monochromatic subtree of $T'$ will give rise to a connected tree embedded in $\widetilde\Sigma$. See Figure \ref{fig:preimage}.)
    
    Choose an arc $\gamma$ connecting the preimage $\pi_w(e) \in i^{-1}(f'(w))$ to any of the preimages of $v$. By the previous paragraph, we can choose it so that its interior avoids $i^{-1}(f'(T'))$, and of course also the preimages of the double points. Thus, we can define an embedding $f$ by setting $f(e) = i(\gamma)$. The choice of the starting point of $\gamma$ (namely $\pi_w(e)$) was made so that the bipartition of $E(w)$ coming from the local sheets of $\Sigma$ is exactly $\Pi_w$, and therefore $f$ defines a suitable embedding of $\LBT$ into $\Sigma$.
\end{proof}

\section{Applications}
\label{sec:applications}

{
\renewcommand{\thetheorem}{\ref{thm:plumbing}}
\begin{theorem}
%\label{thm:plumbing}
        If there is a plumbing tree of $n$ smooth (resp.\ locally flat) spheres $S = S_1 \cup \ldots \cup S_n$ embedded into a smooth (resp.\ topological) 4-manifold $X^4$, then any knot $K  \subset \sphere^3$ with 4-dimensional clasp number $c_4(K) \leq n-1$ (resp.\ $c_4^{\mathrm{top}}(K) \leq n-1$) is smoothly (resp.\ topologically) slice in $X^4$.
\end{theorem}
\addtocounter{theorem}{-1}
}

\begin{proof}
    The knot $K$ bounds a normally immersed disk $D$ with exactly $n-1$ self-intersections in $\sphere^{3} \times [0,1]$ (if necessary, add extra self-intersections), and thus also in a collar neighbourhood of $\del(X \setminus \Int \ball^4)$.
    %The disk is obtained by tracing a homotopy of the knot making $n-1$ crossing changes on the way, and then capping off with a slice disk we assumed exist.
    We can assume that the disk $D \subset X \setminus \Int\ball^4$ is disjoint from the plumbing tree $S$.

    We use Lemma \ref{lem:treeinplumbing} to find a suitable embedding $f_S$ of a locally bipartitioned tree $\LBT $ with $n-1$ vertices into $S$. By Lemma \ref{lem:treeinconnectedsurface}, the same tree can be suitably embedded into the disc $D$.
    
    By Lemma \ref{lem:linkofembedding}, the links we get on the boundaries of small neighbourhoods of these two embeddings of $\LBT $ are the same. Finally, as these links are isotopic to their mirror images, we can apply Lemma \ref{lem:tubing} and tube the self-intersections of $D$ with the plumbing intersections, thus removing all of them.
    Note that the second bullet point of Lemma \ref{lem:treeinplumbing} implies that after removing a small ball about $f_S(T)$ what is left of each sphere $S_i$ is a disc; therefore, we do not add any genus to the immersed disk $D$ with the tubing procedure. This way, after tubing, we get a properly embedded disk $D' \subset X \setminus \Int \ball^4$ with boundary $K$.
\end{proof}   

%\MM{Added this paragraph.}

Theorem \ref{thm:plumbing} immediately implies that every knot is slice in $S^2 \times S^2$, since $S^2 \times S^2$ contains arbitrarily long plumbing trees of spheres, obtained by taking a copy of the first $S^2$ factor and several disjoint copies of the second $S^2$ factor.
(A similar reasoning works for $\CP2 \# \overline{\CP2}$.)
Later we will establish a more general result, see Theorem \ref{thm:0-sphere-intro}.

We now restate and prove Corollary \ref{cor:En-intro} from the introduction. Theorem \ref{thm:K3-intro} from the introduction follows immediately from it by recalling that $E(2) = K3$ and $c_4 \leq u$.

{
\renewcommand{\thetheorem}{\ref{cor:En-intro}}
\begin{corollary}
    For $n\geq 2$, every knot $K$ with 4-dimensional clasp number
    \[
    c_4(K) \leq 11 \cdot n - \ceil*{\frac n5}
    \]
    is (smoothly) slice in $E(n)$.
\end{corollary}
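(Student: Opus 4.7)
The plan is to derive the corollary directly from Theorem \ref{thm:plumbing}, using an explicit large plumbing of spheres in $E(n)$ supplied by the literature. Since Theorem \ref{thm:plumbing} turns a plumbing tree of $N$ smoothly embedded spheres in $E(n)$ into a sliceness statement for all knots with $c_4(K) \leq N - 1$, the target bound $c_4(K) \leq 11n - \ceil*{\frac{n}{5}}$ will follow as soon as one produces a plumbing tree of
\[
N := 11n - \ceil*{\frac{n}{5}} + 1
\]
smoothly embedded spheres in $E(n)$.

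The first step is therefore to invoke the construction of Stipsicz-Szab\'o \cite{SSz:spheres}, who produced explicit plumbing trees of smoothly embedded spheres in each elliptic surface $E(n)$, generalising the 22-sphere plumbing of Finashin-Mikhalkin in $K3$. For $n = 2$ this already yields $N = 22$ and recovers the $c_4 \leq 21$ bound of Theorem \ref{thm:K3-intro}. For general $n$ the task is to count the vertices in the Stipsicz-Szab\'o tree and verify that the count is at least $N$; the mildly irregular factor $\ceil*{\frac{n}{5}}$ suggests that the tree is assembled from roughly $11$ spheres per ``section'' $E(1)$ of the elliptic fibration of $E(n)$, with a single sphere shared or discarded after every group of five such sections. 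Once that count is in place, Theorem \ref{thm:plumbing} applies verbatim and the corollary follows.

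The main, and really the only, obstacle in this plan is the combinatorial bookkeeping needed to extract the precise sphere count $11n - \ceil*{\frac{n}{5}} + 1$ from \cite{SSz:spheres}: their plumbing trees are described in terms of multisections and local configurations around nodal and cusp fibres, so turning that description into the closed formula above is an exercise in counting vertices rather than a new piece of 4-dimensional topology. All of the genuinely topological content---the tubing over locally bipartitioned trees embedded both in the sphere plumbing and in the immersed disc---has already been packaged into Theorem \ref{thm:plumbing} and its supporting lemmas.
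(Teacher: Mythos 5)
Your proposal is correct and follows the paper's argument exactly: cite the Stipsicz--Szab\'o construction for a plumbing tree of $11n - \ceil*{\frac{n}{5}} + 1$ smoothly embedded spheres in $E(n)$, then apply Theorem~\ref{thm:plumbing}. The paper likewise attributes the precise sphere count to \cite[Proof of Theorem 1.1]{SSz:spheres} without re-deriving it, so there is no further content required.
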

\addtocounter{theorem}{-1}
}

\begin{proof}
    By \cite[Proof of Theorem 1.1]{SSz:spheres}, $E(n)$ contains a plumbing tree of $11\cdot n - \ceil*{\frac n5} + 1$ smooth spheres.
    Thus, we can apply Theorem \ref{thm:plumbing} to conclude.

    In the special case of $E(2) = K3$, the existence of a plumbing tree of 22 spheres was in fact proved earlier in \cite{FM:K3}. See Figure \ref{fig:22spheres} for an illustration of such a plumbing tree.
    %According to the previous lemma, we are done if we find an embedded plumbing of $22$ spheres inside of the $K3$ surface, and it is a well known fact that this exist (ref).
    %    
    %\todo{Maybe a good idea is to delete the rest and just cite a good source with one or two sentences of the sort 'it's proved this way'. And the actual proof I will include in the thesis}
    %One way to see this is to observe $K3$ is also $E(2)$, the elliptic surface. This means there is a singular fibration on $K3$ over a 2-sphere, where the fibers are tori except for a finite number of them. These singular fibers are some sphere plumbings which were already classified (nice presentation in (ref) - SSS).
    %    
    %From the same reference, we know that the total monodromy around all the singular fibers is $(ab)^{12}$ where $a$ and $b$ present generators. Represent $(ab)^{12} = (ab)^{4} \cdot (ab)^{4} \cdot (ab)^{4}$ and note that this is exactly the monodromy of the $\widetilde{E_{6}}$ singular fiber (ref). Each of these singular fibers consists of 7 spheres (of self-intersection -2). Finally, we can take the fibration to have a section (also of self-intersection -2) and this gives us the 22-nd sphere and the sphere configuration as in Figure \ref{22spheres}
\end{proof}
        
     \begin{figure}
             \centering
             \includegraphics[width=0.7\textwidth]{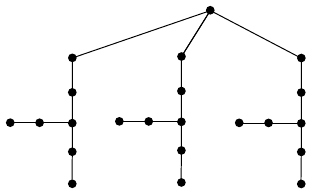}
             \caption{A plumbing tree of 22 spheres in $E(2) = K3$. This is the example in \cite[Figure 3]{FM:K3}, which can be realised by viewing $K3$ as a singular elliptic fibration and by taking three $\widetilde{E}_{6}$ fibers and a section.}
             \label{fig:22spheres}
    \end{figure}

The previous result was an application of Theorem \ref{thm:plumbing} to a family of symplectic 4-manifolds, which always have non-vanishing Seiberg-Witten invariants.
By contrast, the next result is for 4-manifolds with a 0-framed sphere, whose Seiberg-Witten invariants are always vanishing. This result follows from \cite[Lemma 1]{N:slice}, but we give here a proof using the techniques that we just developed.

Recall that every closed, connected, embedded surface $\Sigma$ in a closed 4-manifold $X$ admits a geometrically dual surface $\Sigma^*$ if its homology class is primitive. To find a dual surface, first take an algebraically dual immersed surface (which exists by Poincar\'e duality), and resolve its intersections to make it embedded. As the algebraic intersection between this embedded dual surface and $\Sigma$ is +1, pair the positive and negative intersections except for one. Finally, add tubes for each pair to cancel intersections with $\Sigma$ until there is only one left.

{\renewcommand{\thetheorem}{\ref{thm:0-sphere-intro}}
\begin{theorem}[{\cite{N:slice}}]
    If a smooth 4-manifold $X$ contains a smoothly embedded 0-framed sphere $S$ with a geometrically dual surface $S^*$, then for every knot $K$
    \[g_X(K)\leq g(S^*).\]
\end{theorem}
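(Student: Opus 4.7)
The plan is to adapt the proof of Theorem \ref{thm:plumbing}, constructing a plumbing whose central vertex is the dual surface $S^*$ and whose leaves are $d$ disjoint parallel copies of the $0$-framed sphere $S$. The $0$-framing is precisely what makes these parallel copies available, so we can match any desired number of self-intersections of an immersed disc for $K$.

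First, I would pick any immersed disc $D \subset X^\circ$ bounding $K$ with $d \geq 1$ self-intersections, placed in a collar of $\del X^\circ$; if $d=0$ then $K$ is already slice in $X$ and there is nothing to prove. After a small isotopy we may assume $D$ is disjoint from $S \cup S^*$. Using the $0$-framing of $S$, I would then produce $d-1$ pairwise disjoint parallel copies $S_1,\ldots,S_{d-1}$ of $S$ inside a tubular neighborhood of $S$, each disjoint from $S$ and meeting $S^*$ transversely at a single point close to $S \cap S^*$.

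Together with $S$ and $S^*$, these spheres form a plumbing tree of $d+1$ surfaces, shaped as a star with $S^*$ at the center and $d$ sphere-leaves, whose $d$ double points all lie on $S^*$. From this point the proof proceeds exactly as in Theorem \ref{thm:plumbing}: by Lemma \ref{lem:treeinplumbing} there is a locally bipartitioned tree $\LBT$ with $d$ vertices suitably embedded in this plumbing, by Lemma \ref{lem:treeinconnectedsurface} the same tree can be suitably embedded in $D$, Lemma \ref{lem:linkofembedding} identifies the two boundary links as the associated link $L\LBT$, and Lemma \ref{lem:tubing} combined with Remark \ref{rem:mirror} lets us tube everything together and remove all $2d$ double points.

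The new ingredient compared to Theorem \ref{thm:plumbing} is the genus computation. The link $L\LBT$ has $d+1$ components (each vertex contributes a Hopf link of two components, and each of the $d-1$ edges merges two of them via a connected sum). Therefore, by Lemma \ref{lem:tubing}, the resulting embedded surface $\Sigma' \subset X^\circ$ has Euler characteristic
\[
\chi(\Sigma') = \chi(D) + d\,\chi(\sphere^2) + \chi(S^*) - 2(d+1) = 1 + 2d + (2 - 2g(S^*)) - 2(d+1) = 1 - 2g(S^*),
\]
with $\del \Sigma' = K$. The main point left to verify, and the last subtlety, is that $\Sigma'$ is connected: before tubing there are $d+2$ pieces, namely $D$ and the $d+1$ plumbing components, and by the second bullet of Lemma \ref{lem:treeinplumbing} each plumbing component meets $\del \ball_2$ in exactly one circle. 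Since all $d+1$ circles of $L\LBT$ on $\del \ball_1$ lie in the single piece $D$, each of the $d+1$ tubing annuli connects $D$ to a distinct plumbing piece, yielding a connected $\Sigma'$. Combined with the Euler characteristic above this gives $g(\Sigma') = g(S^*)$, and hence $g_X(K) \leq g(S^*)$.
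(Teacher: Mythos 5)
Your proposal follows essentially the same approach as the paper's proof: build a star-shaped plumbing consisting of $S^*$ and parallel push-offs of the $0$-framed sphere $S$, suitably embed a locally bipartitioned tree into both the plumbing and an immersed disc for $K$, and tube via Lemma \ref{lem:tubing} using Remark \ref{rem:mirror}. Your Euler characteristic and connectedness bookkeeping is correct and slightly more explicit than the paper's; the one point you leave implicit is orientability of the resulting surface, which follows from the same single-tube-per-plumbing-piece observation you already use for connectedness.
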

\addtocounter{theorem}{-1}
}
\begin{proof}
    The proof is very similar to the proof of Theorem \ref{thm:plumbing}, but in this case we exhibit an arbitrarily large plumbing of surfaces, one of which does not have to be a sphere.
    
    Take any knot $K \subset \sphere^3 = \del(X^4 \setminus \Int\ball^4)$, with unknotting number $n \in \mathds{N}$.
    We use the fact that the sphere $S$ is 0-framed so that we are able to take parallel push-offs $S_{i}$ where $S = S_{0}$. Define a plumbing tree of $n+1$ surfaces by taking $n$ parallel copies $S_{i}$, and one copy of the dual surface $S^*$, see Figure \ref{fig:pushoffsAndDual}. By Lemma \ref{lem:treeinplumbing}, we can find a suitable embedding of a locally bipartitioned tree $\LBT$ with $n$ vertices into the plumbing. 
    (In this case there is a simple one, namely a uniformly coloured linear tree $T$ with $n$ vertices and $n-1$ edges obtained by choosing an arc in the dual surface $S^*$ that intersects all the $n$ parallel copies of $S$.)
    
    By Lemma \ref{lem:treeinconnectedsurface}, we can suitably re-embed $\LBT $ into an immersed disk $D$ with $n$ self-intersections contained in a collar neighbourhood of $\del(X \setminus \Int\ball^4)$.

    Finally, using Lemma \ref{lem:tubing}, we tube the self-intersections of $D$ with the plumbing intersections, removing all of them. The only difference with the proof of Theorem \ref{thm:plumbing} is that in this case the tubing operation does add genus, because we tube with the surface $S^*$.
    Since each surface in the plumbing tree is attached via a single tube to $D$, we can orient each of them so that the result will be oriented. Thus, we have constructed a properly embedded surface in $X\setminus \Int\ball^4$ with genus $g(S^*)$ and boundary $K$.
\end{proof}

    \begin{figure}
            \centering
            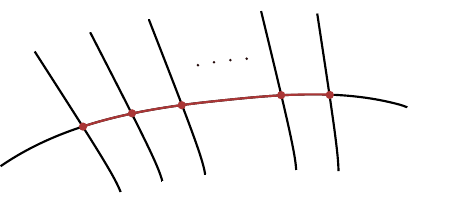
            \caption{The figure shows the push-offs $S_{i}$ of and the geometrically dual surface $S^{*}$.}
            \label{fig:pushoffsAndDual}
    \end{figure}

%As a corollary of Theorem \ref{thm:0-sphere-intro}, we recover the following classical result, which was stated as Corollary \ref{cor:NS} in the introduction.
%
%{\renewcommand{\thetheorem}{\ref{cor:NS}}
%\begin{corollary}[{\cite{N:slice, S:slice}}]
%    Every knot $K$ is (smoothly) slice in $S^2 \times S^2$ and in $\mathds{CP}^{2} \# \overline{\mathds{CP}^{2}}$.
%\end{corollary}
%\addtocounter{theorem}{-1}
%}
%
%\begin{proof}
%    This follows from Theorem \ref{thm:0-sphere-intro} and the fact that both $S^2 \times S^2$ and $\mathds{CP}^{2} \# \overline{\mathds{CP}^{2}}$ have a 0-framed sphere with a geometrically dual sphere.
    %
    %For $S^{2} \times S^{2}$ the two factors give geometrically dual 0-framed spheres.
    %
    %In the case of $\mathds{CP}^{2} \# \overline{\mathds{CP}^{2}}$ the 0-framed sphere is the $(1,1)$-sphere, i.e.\ the connected sum of the 2-spheres $h = \mathds{CP}^{1}$ and $e = \overline{\mathds{CP}^{1}}$ generating the homologies of $\mathds{CP}^{2}$ and $\overline{\mathds{CP}^{2}}$. For its dual sphere we can take the $(1,0)$-sphere.
    %
%\end{proof}

\bibliographystyle{alpha}
\bibliography{bibliography}

\end{document}